\newcommand{\lra}{\longrightarrow}
\newcommand{\Q}{{\mathbb{Q}}}
\newcommand{\F}{{\mathbb{F}}}
\renewcommand{\H}{{\mathbb{H}}}
\renewcommand{\L}{{\mathbb{L}}}
\newcommand{\T}{{\mathbb{T}}}
\renewcommand{\P}{{\mathbb{P}}}
\newcommand{\Z}{{\mathbb{Z}}}
\newcommand{\PP}{{\mathbb{P}^3}}
\newtheorem{theorem}{Theorem}[section]
\newtheorem{proposition}[theorem]{Proposition}
\newtheorem{corollary}[theorem]{Corollary}
\newtheorem{remark}[theorem]{Remark}
\newenvironment{proof}{\noindent{\sl proof:}}{\qed}
\newcommand{\qed}{\hspace*{\fill} QED}
\title{Small Resolutions and Non-Liftable Calabi-Yau threefolds}
\author{S. Cynk \quad\qquad D. van Straten}
\begin{document}
\maketitle

\begin{abstract}
We use properties of small resolutions of the ordinary double point in dimension three
to construct smooth non-liftable Calabi-Yau threefolds. In particular, we
construct a smooth projective Calabi-Yau threefold over $\F_3$ that does not
lift to characteristic zero and a smooth projective Calabi-Yau
threefold over $\F_5$ having an obstructed deformation. We also construct many
examples of smooth Calabi-Yau algebraic spaces over $\F_p$ that do not lift to
algebraic spaces in characteristic zero.

\end{abstract}

\section{Introduction}
M. Hirokado (\cite{Hirokado}) and S. Schr\"oer (\cite{Schroeer}) have constructed examples of projective Calabi-Yau threefolds in characteristic $2$ and $3$ that have no liftings to characteristic zero. The question arises if there exist non-liftable examples in higher characteristic, \cite{Geer}, \cite{Ekedahl}.
In this paper we construct further examples of non-liftable Calabi-Yau threefolds, most notably a rigid Calabi-Yau threefold in characteristic $3$ and a
projective Calabi-Yau threefold in characteristic $5$ with an obstructed
deformation. Using our method, it is easy to produce non-liftable Calabi-Yau
threefolds
in higher characteristic in the category of algebraic spaces. Up to now, we
were unable to find further projective examples.

Our method exploits a remarkable feature of birational geometry in dimension $\ge 3$, namely the appearance of singularities that have a {\em small resolution}, that is,  admit a map $\pi: Y \lra X$ such that the exceptional set has codimension $\ge 2$ in $Y$.
The key example of the ordinary double point in dimension three was described by Atiyah \cite{Atiyah}.
It admits two  different small resolutions that contract a single rational curve with normal bundle
 ${\cal O}(-1) \oplus {\cal O}(-1)$. Such a rational curve is a {stable submanifold}
in the sense that it lifts to {any deformation} of the ambient variety, \cite{Kodaira}.
As a consequence, blowing down deformations of such small
resolutions give deformations of the variety which remain
singular, \cite{Friedman}. This is very different form  the
geometry related to the ordinary double point in dimension two.
We use this peculiar property of the three dimensional ordinary
double point to give a criterion for varieties in characteristic $p$ to have no lifting to characteristic zero. The criterion can be applied to certain rigid Calabi-Yau threefolds that can be obtained as resolution of a double cover of $\P^3$, ramified over a reducible hypersurface of degree eight and fibre products of rational elliptic surfaces.

\section{Generalities on Liftings}
Consider a scheme $f: X \lra T$ flat over $T$ and assume
$T \hookrightarrow S$ embeds $T$ as a closed subscheme in $S$.
By a {\em lifting} of $X$ to $S$ we mean a scheme $F:{\cal X} \lra S$ flat over $S$,
such that $X={\cal X} \times
_S T$.
We are mainly interested in the case $T=Spec(k)$, where $k$ is a
field of characteristic $p>0$ and $S$ is artinian local ring with $k$ as
residue field. Such liftings are usually constructed by a stepwise process from
situations where $T \subset S$ is defined by an ideal $I$ of square zero,
which will be assumed from now on. One could in fact reduce to the case
where $T \hookrightarrow S$ is a {\em simple extension}, that is, if $S$ is defined by an
ideal $I$ that is annihilated by the maximal ideal of $S$.

The liftings of $f:X \lra T$ to $S$ form a category (in fact a groupoid) where
morphisms between two liftings are defined to be an isomorphism over $S$, which
restricts to the identity over $T$.

If $f:X \lra T$ is {\em smooth}, then liftings always exist locally; the local
lifts of $U \subset X$ form a torsor over $\Theta_{U/T}:=Hom_U(\Omega_{U/T},{\cal O}_U)$.
There is an obstruction element $Ob(X/T,S) \in H^2(X,\Theta_{X/T} \otimes f^*I)$ that
vanishes precisely when $f:X \lra T$ lifts to $S$. If $Ob(X/T,S)=0$, then the
set of isomorphism classes of liftings of $X \lra T$ is a torsor over $H^1(X,\Theta_{X/T} \otimes f^*I)$.
The standard reference for this material is \cite{Grothendieck} or  Expos\'e III in SGA1, \cite{SGA1}.

In the more general case where $f: X \lra T$ is not necessarily smooth, one has to use the theory of the
cotangent complex $\L_{X/T}^{\cdot}$, the derived version of $\Omega_{X/T}$,\cite{IllusieCC}.
For an ${\cal O}_X$-module ${\cal F}$ we put $\T^i(X/T,{\cal F}):=Ext^i_X(\L_{X/T}^{\cdot},{\cal F})=\H^i(Hom_X(\L_{X/T}^{\cdot},{\cal F}))$.
Then one has the following fundamental fact:

\begin{theorem}(\cite{IllusieTN}, Theorem 5.31)
Let $T \hookrightarrow S$ be defined by an ideal $I$ of square zero and $f:X \lra T$
a flat scheme over $T$. Then there is an obstruction
\[Ob(X/T,S) \in \T^2(X/T,f^*I)\]
It is zero precisely if $f:X \lra T$ can be lifted to $S$. If $Ob(X/T,S)=0$,
then the
set of liftings of $f: X \lra T$ is a torsor over $\T^1(X/T,f^*I)$.
The group of automorphisms of any lifting is isomorphic to $\T^0(X/T,f^*I)$.
\end{theorem}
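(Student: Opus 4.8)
The plan is to reduce the statement to a local computation with the cotangent complex over affine opens and then to globalize it by reinterpreting liftings as square-zero extensions of sheaves of algebras in the ringed topos of $X$; the globalization, developed in \cite{IllusieCC}, is where the genuine difficulty sits, so I would set up the affine picture by hand and quote the descent machinery. For the affine step, take $T = Spec(A_0)$, $S = Spec(A)$ with $A \lra A_0$ surjective of kernel $I$, $I^2 = 0$, and $X = Spec(B)$ with $B$ flat over $A_0$. One first observes that if $\tilde B$ is a flat lifting then, because $I^2 = 0$, the ideal $I\tilde B$ equals $I \otimes_A \tilde B = I \otimes_{A_0} B =: f^*I$; conversely $A$-flatness of $\tilde B$ is \emph{equivalent} to the ideal $I\tilde B$ being exactly the $B$-module $f^*I$ — this is the local flatness criterion $Tor_1^A(\tilde B, A_0) = 0$. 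Hence flat liftings of $X$ to $S$ are the same as square-zero extensions of the $A$-algebra $B$ by the $B$-module $f^*I$ that induce the given extension $A$ of $A_0$; equivalently, they form the fibre of the forgetful map $Exal_A(B, f^*I) \lra Exal_A(A_0, f^*I)$ over the class of the extension $A$, pushed forward along the coefficient map $I \to f^*I$.

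The next step is to convert these $Exal$ groups into $Ext$ groups of the cotangent complex. Applying $RHom_B(-, f^*I)$ to the transitivity triangle relating $\L_{A_0/A}^{\cdot} \otimes_{A_0}^{\mathbf L} B$, $\L_{B/A}^{\cdot}$ and $\L_{B/A_0}^{\cdot}$, and using $Ext^i_B(\L_{B/A_0}^{\cdot}, f^*I) = \T^i(X/T, f^*I)$, the adjunction $Ext^i_B(\L_{A_0/A}^{\cdot}\otimes_{A_0}^{\mathbf L} B, f^*I) = Ext^i_{A_0}(\L_{A_0/A}^{\cdot}, f^*I)$ — which is $0$ for $i=0$ since $\Omega_{A_0/A} = 0$, and is $Exal_A(A_0, f^*I)$ for $i = 1$ — together with $Ext^1_B(\L_{B/A}^{\cdot}, f^*I) = Exal_A(B, f^*I)$, the associated long exact sequence contains the segment
\[0 \lra \T^1(X/T, f^*I) \lra Exal_A(B, f^*I) \lra Exal_A(A_0, f^*I) \stackrel{\partial}{\lra} \T^2(X/T, f^*I).\]
Let $e \in Exal_A(A_0, f^*I)$ be the image of the class of the extension $A$, and put $Ob := \partial(e)$. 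Chasing this sequence yields all three assertions simultaneously: $Ob = 0$ iff $e$ lifts to $Exal_A(B, f^*I)$, i.e.\ iff a flat $\tilde B$ exists; when $Ob = 0$ the set of liftings is the fibre over $e$, hence a torsor under $\ker\big(Exal_A(B, f^*I) \to Exal_A(A_0, f^*I)\big) \cong \T^1(X/T, f^*I)$; and an automorphism of a fixed lifting $\tilde B$ restricting to the identity on $B$ is precisely $1 + \theta$ with $\theta$ an $A_0$-derivation $B \to f^*I$ — it kills the ideal $I\tilde B$ since $I^2 = 0$ — that is, $\theta \in \T^0(X/T, f^*I)$.

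For general $X/T$ I would carry this over to the small Zariski (or étale) topos of $X$, with ${\cal O}_X$ viewed as an $f^{-1}{\cal O}_S$-algebra through $f^{-1}{\cal O}_S \to f^{-1}{\cal O}_T \to {\cal O}_X$, identifying a flat lifting of $X$ to $S$ with a square-zero extension of sheaves of $f^{-1}{\cal O}_S$-algebras $0 \to f^*I \to {\cal O}_{\tilde X} \to {\cal O}_X \to 0$ inducing the extension ${\cal O}_S$ of ${\cal O}_T$ by $I$ — flatness of $f$ being exactly what forces the ideal of $X$ in $\tilde X$ to be $f^*I$ and makes the extension $S$-flat. I would then appeal to Illusie's theory: $\L_{X/T}^{\cdot}$ is a well-defined, functorial object of the derived category of ${\cal O}_X$-modules carrying transitivity triangles, the global set of such extensions fits into the same long exact sequence as above — with $Exal_{f^{-1}{\cal O}_T}({\cal O}_X, f^*I)$ identified with $\T^1(X/T, f^*I)$ and the connecting map valued in $\T^2(X/T, f^*I)$ — and the class of the extension ${\cal O}_S$ of ${\cal O}_T$, transported into that group, is $Ob(X/T, S)$, whence the three statements by the identical chase. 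What the global $Ext$ groups contribute beyond the affine ones is recorded by the local-to-global spectral sequence $H^p\big(X, \mathcal{E}xt^q_{{\cal O}_X}(\L_{X/T}^{\cdot}, f^*I)\big) \Rightarrow \T^{p+q}(X/T, f^*I)$: its $\mathcal{E}xt^{\ge 2}$ terms carry the possible failure of Zariski-local flat liftings even to exist (which genuinely occurs at non-lci points), its $H^{\ge 1}$-of-lower-$\mathcal{E}xt$ terms the failure of local liftings to glue.

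The hard part will be the input I am quoting rather than reproving — as always with the cotangent complex: that $\L_{X/T}^{\cdot}$, constructed from simplicial polynomial resolutions, is independent of the resolution and functorial in the derived category of the \emph{topos}, and that square-zero extensions of sheaves of algebras are genuinely governed by $\T^0$ (automorphisms), $\T^1$ (a torsor of isomorphism classes) and an obstruction class in $\T^2$. Granting that, the affine identifications and the long-exact-sequence chase above are routine, and the descent from the affine statement to the topos — which is the substance of \cite{IllusieCC} and of the cited Theorem~5.31 — completes the proof.
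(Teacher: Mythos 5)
The paper does not actually prove this statement: it is quoted verbatim as Theorem~5.3.1 of the cited reference, and the authors simply point to \cite{IllusieCC}, III.2 for the argument. So there is no ``paper's proof'' to match yours against; what can be said is that your sketch is a correct reconstruction of the proof as it appears in Illusie. The affine reduction is right: the local flatness criterion (given $B$ flat over $A_0$, $A$-flatness of $\tilde B$ is equivalent to $I\otimes_A\tilde B\to I\tilde B$ being an isomorphism, i.e.\ to the ideal being $f^*I$) is exactly what identifies flat liftings with square-zero $A$-algebra extensions of $B$ by $f^*I$ lying over the pushout of the extension $A$ of $A_0$; the transitivity triangle for $A\to A_0\to B$, together with $\mathrm{Exal}\cong \mathrm{Ext}^1(\L^{\cdot},-)$ and the vanishing of $\Omega_{A_0/A}$, gives precisely the four-term exact sequence you write, and the chase yields the obstruction, the torsor structure under $\T^1$, and the identification of automorphisms with derivations $B\to f^*I$ (note these kill $I\tilde B$ automatically, as you say). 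The globalization to the ringed topos and the local-to-global spectral sequence are likewise as in Illusie. The only caveat is the one you flag yourself: the construction and functoriality of $\L^{\cdot}_{X/T}$, the identification of $\mathrm{Exal}$ with $\mathrm{Ext}^1$ of the cotangent complex, and the descent of the affine statement to the topos are quoted rather than proved, which is appropriate here since the statement being established is itself a citation of that machinery. No gap in the logic of the reduction.
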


We refer to \cite{IllusieCC}, III 2 for details.
Note that in the case of a simple extension the relevant groups are isomorphic to
$\T^i(X/T)$ ($i=0,1,2$), where we put $\T^i(X/T):=\T^i(X/T,{\cal O}_X)$.

\section{Blowing down Liftings}

The phenomenon of simultaneous resolution was first observed for the simple
singularities by Brieskorn \cite{Brieskorn}. An important feature of the situation
is that under mild cohomological conditions deformations or liftings can
be ``blown-down''. We formulate and sketch a proof of a variant of a theorem
formulated in \cite{Wahl}.

\begin{theorem}
Let $\pi: Y \lra X$ be a morphisms of schemes over $k$ and let $S=Spec(A)$, $A$ artinian
with residue field $k$. Assume that
 ${\cal O}_X=\pi_*({\cal O}_Y)$ and $R^1\pi_*({\cal O}_Y)=0$.
Then for every lifting ${\cal Y} \lra S$ of $Y$ there exists a preferred
lifting ${\cal X} \lra S$ making a commutative diagram
\begin{center}
\[
\begin{array}{ccc}
Y& \hookrightarrow & {\cal Y}\\
\downarrow&&\downarrow\\
X&\hookrightarrow&{\cal X}\\
\end{array}
\]
\end{center}
\end{theorem}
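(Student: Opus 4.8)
The natural strategy is to construct $\mathcal{X}$ as a ringed space whose underlying topological space is that of $X$ (equivalently, of $Y$, since $\pi$ is proper with connected fibres by the hypothesis $\mathcal{O}_X = \pi_*\mathcal{O}_Y$) and whose structure sheaf is $\mathcal{O}_{\mathcal{X}} := \pi_*(\mathcal{O}_{\mathcal{Y}})$. So the first step is to take a lifting $\mathcal{Y} \to S$ and simply push forward its structure sheaf along $\pi$ (or rather along the composite $|Y| = |\mathcal{Y}| \to |X|$). One then has a natural map $\mathcal{O}_{\mathcal{X}} \to \mathcal{O}_X$ obtained by pushing forward $\mathcal{O}_{\mathcal{Y}} \to \mathcal{O}_Y$, and the claim is that this exhibits $\mathcal{X}$ as a flat lifting of $X$ over $S$ through which $\mathcal{Y}$ factors.

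The key computation is flatness of $\mathcal{O}_{\mathcal{X}}$ over $A$, and here is where the cohomological hypotheses enter. Filtering $A$ by powers of the maximal ideal, one reduces to a simple extension $0 \to I \to S \to T \to 0$ with $I \cong k$; flatness of $\mathcal{O}_{\mathcal{X}}$ over $S$ is then equivalent to exactness of $0 \to I\otimes_k \mathcal{O}_X \to \mathcal{O}_{\mathcal{X}} \to \mathcal{O}_X \to 0$. Since $\mathcal{Y}$ is $S$-flat we have the analogous exact sequence $0 \to I \otimes_k \mathcal{O}_Y \to \mathcal{O}_{\mathcal{Y}} \to \mathcal{O}_Y \to 0$ on $Y$; applying $\pi_*$ and using the projection-type identification $\pi_*(I\otimes_k \mathcal{O}_Y) = I\otimes_k \pi_*\mathcal{O}_Y = I\otimes_k \mathcal{O}_X$ together with the hypothesis $R^1\pi_*\mathcal{O}_Y = 0$, the long exact sequence of higher direct images collapses to give precisely
\[
0 \lra I\otimes_k \mathcal{O}_X \lra \pi_*\mathcal{O}_{\mathcal{Y}} \lra \pi_*\mathcal{O}_Y \lra 0,
\]
i.e. $0 \to I\otimes_k\mathcal{O}_X \to \mathcal{O}_{\mathcal{X}} \to \mathcal{O}_X \to 0$, which is the required flatness statement. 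This also shows $\mathcal{X}\times_S T = X$ on the level of structure sheaves, so $\mathcal{X}$ is genuinely a lifting of $X$.

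It remains to produce the morphism $\mathcal{Y} \to \mathcal{X}$ over $S$ restricting to $\pi$: on topological spaces it is the map already used, and on structure sheaves it is the adjunction unit $\mathcal{O}_{\mathcal{X}} = \pi_*\mathcal{O}_{\mathcal{Y}} \to \mathcal{O}_{\mathcal{Y}}$; compatibility with the $S$-structures and with $\pi$ after restriction to $T$ is immediate from the construction. The main obstacle, in my view, is not any single hard estimate but rather the bookkeeping needed to run the argument in the non-smooth (cotangent-complex) setting and to check that $\mathcal{X}$ so constructed is actually a \emph{scheme} and not merely a ringed space — one wants to verify this locally, where $X$ is affine, say $X = \mathrm{Spec}(R)$, and show $\mathcal{O}_{\mathcal{X}}(X)$ is a flat $A$-algebra lifting $R$ whose spectrum recovers $\mathcal{X}$ over that open; the dévissage to simple extensions and the vanishing $R^1\pi_*\mathcal{O}_Y=0$ are exactly what make this local picture glue. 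The word ``preferred'' in the statement reflects that this construction is functorial in $\mathcal{Y}$, so no choices are involved once $\mathcal{Y}$ is fixed.
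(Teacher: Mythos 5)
Your construction is essentially the paper's own proof: the paper defines the lift affine-locally by $A(U):=H^0(\pi^{-1}(U),{\cal O}_{\cal Y})$, i.e.\ exactly the sections of your $\pi_*{\cal O}_{\cal Y}$, and establishes flatness by the same d\'evissage to a simple extension together with the long exact cohomology sequence, using $R^1\pi_*({\cal O}_Y)=0$ for flatness and $\pi_*({\cal O}_Y)={\cal O}_X$ to identify the closed fibre, with the gluing inherited from ${\cal Y}$. (Only your parenthetical claim that the underlying space of ${\cal X}$ is ``equivalently that of $Y$'' is inaccurate --- $\pi$ need not be a homeomorphism, e.g.\ for a small resolution --- but nothing in your argument depends on it.)
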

\begin{proof}
A lifting of $X$ over $S$ is defined by giving for each
affine open subset $U \subset X$ a flat ${\cal O}_S$-algebra, reducing to ${\cal O}_U$ over $k$. These algebras should be compatible in the sense that isomorphisms are given for the restrictions to overlaps, which satisfy a cocycle condition.
Let $V:=\pi^{-1}(U) \subset Y$. If a lifting ${\cal Y} \lra S$ is given, we
obtain an ${\cal O}_S$-algebra $A(U):=H^0(V,{\cal O}_{\cal Y})$.

The vanishing of $R^1\pi_*({\cal O}_Y)$ implies that $H^1(V,{\cal O}_Y)=0$.
By reducing to the case of a simple extension and using the long exact cohomology sequence, one sees that this implies that $A(U)$ is ${\cal O}_S$-flat, whereas $\pi_*({\cal O}_Y)={\cal O}_X$ shows that $Spec(A(U))$ indeed is a lift of $U$ over $S$. The compatibilities for gluing are implied by the corresponding properties for ${\cal Y}$.
\end{proof}

Next we want to compare the groups $\T^i(Y/T)$ and $\T^i(X/T)$. Recall that these are
hypercohomology groups of the cotangent complex, which can be calculated using a
local-to-global spectral sequence that reads
\[ E_2^{i,j}=H^i(X,{\cal T}^j_{X/T}) \Longrightarrow \T^{i+j}(X/T)\]
Here ${\cal T}^j_{X/T}$ are the cohomology sheaves of the cotangent complex, so
${\cal T}^0_{X/T} = \Theta_{X/T}$. The sheaf ${\cal T}^1_{X/T}$ determines the
local infinitesimal deformations and ${\cal T}^2_{X/T}$ the local obstructions.

\begin{proposition} Let $\pi: Y \lra X$ be a morphism of schemes over $T$.\\
If ${\Theta}_{X/T}=\pi_*({\Theta}_{Y/T})$ and $R^1\pi_*({\Theta}_{Y/T})=0$
then there is an injection
\[ \T^1(Y/T) \hookrightarrow \T^1(X/T)\]
\end{proposition}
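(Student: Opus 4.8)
The plan is to exhibit the asserted injection as the composite of three canonical maps and to read off injectivity from them. The two hypotheses are exactly what is needed to feed the Leray spectral sequence of $\pi$ applied to the sheaf $\Theta_{Y/T}$, and this is the natural first move.

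First I would extract the cohomological content of the hypotheses. The Leray spectral sequence $E_2^{p,q}=H^p(X,R^q\pi_*\Theta_{Y/T}) \Longrightarrow H^{p+q}(Y,\Theta_{Y/T})$ has five-term exact sequence
\[ 0 \lra H^1(X,\pi_*\Theta_{Y/T}) \lra H^1(Y,\Theta_{Y/T}) \lra H^0(X,R^1\pi_*\Theta_{Y/T}) \lra H^2(X,\pi_*\Theta_{Y/T}). \]
Substituting $\pi_*\Theta_{Y/T}=\Theta_{X/T}$ and $R^1\pi_*\Theta_{Y/T}=0$ collapses this to an isomorphism $H^1(Y,\Theta_{Y/T}) \cong H^1(X,\Theta_{X/T})$. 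So the hypotheses assert precisely that $\pi$ leaves the first cohomology of the tangent sheaf unchanged.

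Next I would bring in the local-to-global spectral sequence $H^i(X,{\cal T}^j_{X/T}) \Longrightarrow \T^{i+j}(X/T)$ of the excerpt and its analogue for $Y$. For any scheme the term $E_2^{1,0}$ survives to $E_\infty$, because the differentials that could hit it originate in groups of negative column index and hence vanish; this gives a canonical injection $H^1(\,\cdot\,,\Theta)\hookrightarrow \T^1$. Applied to $X$ it yields $H^1(X,\Theta_{X/T})\hookrightarrow \T^1(X/T)$, the inclusion of the locally trivial (equisingular) first-order deformations. Applied to $Y$ the same edge map becomes an \emph{isomorphism} $H^1(Y,\Theta_{Y/T}) \cong \T^1(Y/T)$ as soon as the local deformation sheaf ${\cal T}^1_{Y/T}$ vanishes, which holds in the situation of interest, where $\pi$ is a small resolution and $Y$ is smooth, so that ${\cal T}^j_{Y/T}=0$ for all $j>0$. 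Composing the three steps produces
\[ \T^1(Y/T) \;=\; H^1(Y,\Theta_{Y/T}) \;\xrightarrow{\ \sim\ }\; H^1(X,\Theta_{X/T}) \;\hookrightarrow\; \T^1(X/T), \]
the desired injection. I would then confirm that this composite is the map induced by the functoriality morphism $L\pi^*\L_{X/T}^{\cdot} \to \L_{Y/T}^{\cdot}$ of cotangent complexes, equivalently the blow-down of deformations supplied by the previous theorem, so that it is the natural map and not merely an abstract injection; this reduces to checking that the Leray isomorphism is compatible with the two edge maps.

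The step I expect to be the real obstacle is the control of the local deformation sheaf ${\cal T}^1_{Y/T}$. The hypotheses bear only on $\Theta_{Y/T}$, hence only on the deformations seen through $H^1$, and say nothing about ${\cal T}^1_{Y/T}$; without its vanishing one cannot identify $\T^1(Y/T)$ with $H^1(Y,\Theta_{Y/T})$. The clean argument therefore rests on the smoothness of $Y$ present in the intended applications, and it is exactly this vanishing that confines the image of $\T^1(Y/T)$ to the locally trivial deformations $H^1(X,\Theta_{X/T})\subset\T^1(X/T)$ and excludes the smoothings recorded in $H^0(X,{\cal T}^1_{X/T})$ — the geometric heart of the whole construction.
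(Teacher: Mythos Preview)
Your argument is essentially identical to the paper's: use the Leray spectral sequence together with the two hypotheses to obtain $H^1(Y,\Theta_{Y/T})\cong H^1(X,\Theta_{X/T})$, and then the edge map of the local-to-global spectral sequence to embed $H^1(X,\Theta_{X/T})$ in $\T^1(X/T)$. The paper simply writes $\T^1(Y/T)=H^1(Y,\Theta_{Y/T})$ without comment, so the vanishing of ${\cal T}^1_{Y/T}$ that you correctly isolate as the crux is left implicit there as well; in every application $Y$ is a small resolution, hence smooth over $T$.
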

\begin{proof}
From the Leray spectral sequence
\[E_2^{i j}=H^i(X,R^j\pi_*(\Theta_{Y/T})) \Longrightarrow H^{i+j}(Y,\Theta_{Y/T})\]
one obtains an isomorphism
$H^0(Y,\Theta_{Y/T}) \simeq H^0(X,\pi_*(\Theta_{Y/T}))$ and
an exact sequence
\[ 0 \lra H^1(X,\pi_*(\Theta_{Y/T})) \lra  H^1(Y,\Theta_{Y/T}) \lra H^0(X, R^1\pi_*(\Theta_{Y/T}))\lra \ldots \]
As by assumption ${\Theta}_{X/T}=\pi_*({\Theta}_{Y/T})$ and $R^1\pi_*({\Theta}_{Y/T})=0$, we
obtain an isomorphism
\[H^1(X,\Theta_{X/T}) \simeq H^1(Y,\Theta_{Y/T})\]

From the local-to-global spectral sequence of deformations one obtains
\[0 \lra H^1(X,\Theta_{X/T}) \lra {\T}^1(X) \lra H^0(X,{\cal T}^1_{X/T}) \lra \ldots \]
and hence $\T^1(Y/T)=H^1(Y,\Theta_{Y/T})=H^1(X,\Theta_{X/T})$ injects into $\T^1(X/T)$.
\end{proof}

\begin{corollary} Let $\pi: Y \lra X$ be a  morphism of schemes over $T$. Assume that:\\

1) $\pi_*({\cal O}_Y)={\cal O}_Y$ and $R^1\pi_*({\cal O}_Y)=0$.\\
2) ${\Theta}_{X/T}=\pi_*({\Theta}_{Y/T})$ and $R^1\pi_*({\Theta}_{Y/T})=0$.\\

Then there is at most one lifting ${\cal Y} \lra S$ of $Y$ which lies over
a given lifting ${\cal X} \lra S$ of $X$.
\end{corollary}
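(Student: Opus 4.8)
The strategy is to realize the set of liftings of $Y$ and of $X$ as torsors, to use the blowing-down construction of Theorem 3.1 to produce a map between these torsors, and to deduce uniqueness from the injectivity established in the Proposition. First I would reduce to the case that $T\hookrightarrow S$ is a simple extension: a general artinian $S$ is reached from $T$ by a finite chain of simple extensions, and the ``at most one lifting over a given ${\cal X}$'' property, being inherited at each step, then follows for the whole chain. For a simple extension Theorem 2.1 and the remark following it say that the isomorphism classes of liftings of $Y$ over $S$, when non-empty, form a torsor under $\T^1(Y/T)$, and likewise those of $X$ under $\T^1(X/T)$.

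By hypothesis 1), Theorem 3.1 applies: to every lifting ${\cal Y}\lra S$ of $Y$ it assigns a preferred lifting $\beta({\cal Y})\lra S$ of $X$, glued from the affine pieces $Spec\,H^0(\pi^{-1}(U),{\cal O}_{\cal Y})$, so that a lifting ``lies over ${\cal X}$'' exactly when $\beta({\cal Y})\cong{\cal X}$. The crucial point is that $\beta$ is a morphism of torsors: there is a linear map $\alpha\colon\T^1(Y/T)\lra\T^1(X/T)$, namely the one induced by $\pi$ on cotangent complexes, with $\beta(\xi\cdot{\cal Y})=\alpha(\xi)\cdot\beta({\cal Y})$ for all $\xi\in\T^1(Y/T)$, and this $\alpha$ is precisely the map shown to be injective in the Proposition under hypothesis 2). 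Granting this, the corollary follows at once: if ${\cal Y}_1$ and ${\cal Y}_2$ are liftings of $Y$ with $\beta({\cal Y}_1)\cong{\cal X}\cong\beta({\cal Y}_2)$, and $\xi\in\T^1(Y/T)$ is the class with ${\cal Y}_2=\xi\cdot{\cal Y}_1$, then $\alpha(\xi)$ is the class relating the two isomorphic liftings $\beta({\cal Y}_1)$ and $\beta({\cal Y}_2)$, hence $\alpha(\xi)=0$, hence $\xi=0$ by injectivity, so ${\cal Y}_1\cong{\cal Y}_2$.

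I expect the only real work to be the equivariance of $\beta$. This I would check in \v{C}ech terms on a fixed affine cover: modifying the lifting ${\cal Y}$ by a class $\xi$ means altering its transition isomorphisms over the $\pi^{-1}(U)$ by sections of $\Theta_{Y/T}$; applying $H^0(\pi^{-1}(U),-)$ and using $R^1\pi_*({\cal O}_Y)=0$ so that no correction terms appear, one finds the transition isomorphisms of $\beta({\cal Y})$ altered by the corresponding sections of $\pi_*\Theta_{Y/T}=\Theta_{X/T}$, which is exactly the recipe for the action of $\alpha(\xi)$; the identification of $\alpha$ with the map of the Proposition is then formal, both being induced by the canonical morphism $\pi^*\L_{X/T}^{\cdot}\lra\L_{Y/T}^{\cdot}$. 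The main obstacle is thus essentially bookkeeping: running the comparison at the level of cocycles rather than cohomology and making sure the hypotheses $R^1\pi_*({\cal O}_Y)=0$ and $R^1\pi_*(\Theta_{Y/T})=0$ enter at the right places.
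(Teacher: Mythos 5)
Your argument follows the paper's proof exactly: reduce to a simple extension, regard the liftings of $Y$ and of $X$ as torsors over $\T^1(Y/T)$ and $\T^1(X/T)$, and conclude from the injectivity $\T^1(Y/T)\hookrightarrow\T^1(X/T)$ supplied by the Proposition. The equivariance of the blow-down map between the two torsors, which you correctly identify as the only point needing verification, is left implicit in the paper, so your proposal merely fills in detail rather than taking a different route.
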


\begin{proof}
One reduces to the case of simple extensions. As the set of liftings
form a torsor over $\T^1$, it is sufficient to show that there is an injection $\T^1(Y/T) \hookrightarrow \T^1(X/T)$, which is the content of the previous proposition.
\end{proof}

\begin{remark}

{\em
1) It seems that in the above corollary the first condition is implied by the second.\\

2) If ${\Theta}_{X/T}=\pi_*({\Theta}_{Y/T})$ and $R^i\pi_*({\Theta}_{Y/T})=0$ for $i=1,2$
then one can combine the Leray spectral sequence for $\pi:Y \lra X$ and
the local-to-global spectral sequences of deformations into a diagram
{\small
\[
\begin{array}{ccccccccccccc}
0 &\lra&H^1(\Theta_{Y/T})&\stackrel{\simeq}{\lra}& \T^1(Y/T)& \lra & 0 & \lra& H^2(\Theta_{Y/T})& \stackrel{\simeq}{\lra} & \T^2(Y/T)& \lra & 0\\
&&\simeq \downarrow&&\downarrow&&&&\simeq \downarrow&&\downarrow&&\\
 0 &\lra&H^1(\Theta_{X/T})&\lra& \T^1(X/T)& \lra & H^0({\cal T}^1_{X/T}) & \lra& H^2(\Theta_{X/T})& \lra & \T^2(X/T)& \lra & \ldots\\
\end{array}
\]
}
which leads to an exact sequence
\[
0\lra \T^1(Y/T) \lra \T^1(X/T) \lra H^0({\cal T}^1_{X/T}) \lra \T^2(Y/T) \lra \T^2(X/T) \lra\ldots
\]
If in addition ${\cal T}^2_{X/T}=0$, then $\dots$ can be replaced by $0$.}
\end{remark}

\section{Liftings of Small Resolutions}

By a {\em small resolution} of a scheme $X$ we mean a smooth modification
$\pi: Y \lra X$ with the property that the exceptional set has codimension $\ge 2$ in $Y$. Only very special singularities do admit a small resolution and in this section we concentrate on the ordinary double point.\\
Let us study first the local situation. We put $X:=Spec(R)$, where $R:=k[[x,y,z,t]]/(f)$,
where $f=xy-zt$.  Let $Z \subset X$ be the closed subscheme defined by the ideal $(x,z) \subset R$ and let
$Y:=B_Z(X)$ be the blow-up of $X$ in $Z$. $Y$ is described as sub-scheme of
$Spec(k[[x,y,z,t]])\times \P^1$ by the equations
\[xy-zt=0,\;\;xu-zv=0,\;\;(u:v) \in \P^1\]
The exceptional locus is a copy of $\P^1$, with normal bundle ${\cal O}(-1) \oplus {\cal O}(-1)$.
In \cite{Friedman} one finds the following basic calculation.

\begin{theorem} (Friedmann)
Let $\pi: Y \lra X$ be a small resolution of the node in dimension three.
Then:\\
1)  ${\cal O}_X=\pi_*({\cal O}_Y)$ and $R^i\pi_*({\cal O}_Y)=0$ for $i\ge 1$.\\
2)  ${\Theta}_{X/k}=\pi_*({\Theta}_{Y/k})$ and $R^i\pi_*({\Theta}_{Y/k})=0$ for $i \ge 1$.\\
\end{theorem}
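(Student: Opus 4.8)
\emph{Proof plan.} All four assertions are local on $X$ and insensitive to replacing $X$ by the spectrum of the completed local ring at the node, so we may assume $X = Spec(R)$, $R = k[[x,y,z,t]]/(xy-zt)$, and that $\pi: Y = B_Z(X) \lra X$ is the blow-up of $Z = V(x,z)$ as in the excerpt. Recall (Atiyah) that $Y$ is regular, $\pi$ is projective and an isomorphism over $U := X \setminus \{0\}$, and $E := \pi^{-1}(0) \cong \P^1$ with $N_{E/Y} \cong {\cal O}(-1)\oplus{\cal O}(-1)$, hence ${\cal I}_E/{\cal I}_E^2 \cong {\cal O}(1)\oplus{\cal O}(1)$; moreover one checks in the two standard charts that ${\cal I}_E = {\mathfrak m}_0{\cal O}_Y$. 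Since $R$ is a hypersurface it is Cohen--Macaulay, hence $S_2$, and $Sing(X) = \{0\}$ has codimension $3$, so $X$ is normal; consequently $j_*{\cal O}_U = {\cal O}_X$, and since $\Theta_{X/k} = {\cal H}om(\Omega_{X/k},{\cal O}_X)$ is reflexive, $j_*\Theta_{U/k} = \Theta_{X/k}$, where $j: U \hookrightarrow X$.

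For the higher direct images I would prove the uniform statement: if ${\cal F}$ is locally free on $Y$ and ${\cal F}|_E$ is a direct sum of line bundles of degree $\ge -1$, then $R^i\pi_*{\cal F} = 0$ for all $i \ge 1$. Indeed $i \ge 2$ is automatic because $\dim E = 1$, while for $i = 1$ one filters ${\cal F}|_{E_n} = {\cal F}\otimes{\cal O}_Y/{\cal I}_E^{n+1}$ (the $n$-th infinitesimal neighbourhood of $E$, which equals $Y\times_X Spec({\cal O}_{X,0}/{\mathfrak m}_0^{n+1})$ since ${\cal I}_E = {\mathfrak m}_0{\cal O}_Y$) with graded pieces ${\cal F}|_E \otimes \mathrm{Sym}^j({\cal I}_E/{\cal I}_E^2) = {\cal F}|_E \otimes {\cal O}(j)^{\oplus(j+1)}$, sums of line bundles of degree $\ge j - 1 \ge -1$; hence $H^1(E_n,{\cal F}|_{E_n}) = 0$ for every $n$, and by the theorem on formal functions the ${\mathfrak m}_0$-adic completion of the coherent, punctually supported sheaf $R^1\pi_*{\cal F}$ vanishes, so $R^1\pi_*{\cal F} = 0$. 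I apply this with ${\cal F} = {\cal O}_Y$ (then ${\cal F}|_E = {\cal O}$, degree $0$) and with ${\cal F} = \Theta_{Y/k}$: the tangent sequence gives $0 \to {\cal O}(2) \to \Theta_{Y/k}|_E \to {\cal O}(-1)\oplus{\cal O}(-1) \to 0$ (which even splits, $Ext^1$ of the outer terms being $H^1(\P^1,{\cal O}(3))^{\oplus 2} = 0$), so the degrees occurring are $2,-1,-1$. Thus $R^i\pi_*{\cal O}_Y = 0$ and $R^i\pi_*\Theta_{Y/k} = 0$ for $i \ge 1$.

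For the zeroth direct images, the chain ${\cal O}_X \subseteq \pi_*{\cal O}_Y \subseteq j_*j^*\pi_*{\cal O}_Y = j_*{\cal O}_U = {\cal O}_X$ forces $\pi_*{\cal O}_Y = {\cal O}_X$. For $\Theta_{Y/k}$: as $Y$ is integral, $\pi_*\Theta_{Y/k}$ is torsion-free, so reflexivity of $\Theta_{X/k}$ yields an inclusion $\pi_*\Theta_{Y/k} \hookrightarrow j_*j^*\pi_*\Theta_{Y/k} = j_*\Theta_{U/k} = \Theta_{X/k}$ that is an isomorphism over $U$. To upgrade it to an equality it suffices that $\pi_*\Theta_{Y/k}$ have depth $\ge 2$ at $0$, for then it is $S_2$ and equals $j_*$ of its restriction to $U$. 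Here, knowing now that $R^i\pi_*\Theta_{Y/k} = 0$ for $i \ge 1$, I would use the Grothendieck spectral sequence for the composite $\Gamma_{\{0\}}\circ\pi_* = \Gamma_E$, which degenerates to isomorphisms $H^p_{\{0\}}(X,\pi_*\Theta_{Y/k}) \cong H^p_E(Y,\Theta_{Y/k})$; the right-hand side vanishes for $p = 0,1$ because $\Theta_{Y/k}$ is locally free on the regular (hence Cohen--Macaulay) scheme $Y$ and $E$ has codimension $2$, which is precisely the required depth.

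The step I expect to be the main obstacle is this last identification $\pi_*\Theta_{Y/k} = \Theta_{X/k}$: because $\Theta_{X/k}$ is only reflexive, not locally free, one cannot argue naively, and one must either push the local-cohomology/depth argument through or, alternatively, lift an explicit generating set of derivations of $R$ across the two charts of $Y$ and compare with the presentation $\Theta_{X/k} = \ker\!\big(R^{4} \stackrel{(y,\,x,\,-t,\,-z)}{\lra} R\big)$, which is elementary but tedious. The vanishing of the higher direct images, by contrast, is a routine consequence of Atiyah's normal-bundle computation together with the theorem on formal functions.
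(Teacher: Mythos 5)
Your argument is correct, and it is worth saying up front that the paper offers no proof of this statement at all: it simply quotes the result from Friedman's paper \cite{Friedman} (``In \cite{Friedman} one finds the following basic calculation''), so there is no internal proof to compare against. Your route --- the theorem on formal functions applied to the infinitesimal neighbourhoods $E_n$ cut out by ${\bf m}_0{\cal O}_Y={\cal I}_E$, combined with the conormal computation ${\cal I}_E/{\cal I}_E^2\cong{\cal O}(1)\oplus{\cal O}(1)$ and the splitting type $\Theta_{Y/k}|_E\cong{\cal O}(2)\oplus{\cal O}(-1)\oplus{\cal O}(-1)$ --- is essentially the computation in Friedman's paper, and your uniform lemma (restriction to $E$ a sum of line bundles of degree $\ge -1$ implies $R^{i}\pi_*=0$ for $i\ge1$) packages it cleanly; note the splitting of the tangent sequence is not even needed, since the filtration argument only uses the degrees of the graded pieces. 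The treatment of the $\pi_*$ statements is also sound: normality of the hypersurface $R$ gives $\pi_*{\cal O}_Y={\cal O}_X$ by the sandwich through $j_*{\cal O}_U$, and for the tangent sheaf the degeneration $H^p_{\{0\}}(X,\pi_*\Theta_{Y/k})\cong H^p_E(Y,\Theta_{Y/k})=0$ for $p\le 1$ (codimension $2$, $\Theta_{Y/k}$ locally free on the regular $Y$) supplies exactly the depth needed to upgrade the inclusion $\pi_*\Theta_{Y/k}\hookrightarrow\Theta_{X/k}$ to an equality --- this correctly disposes of the step you flag as the main obstacle. One genuine advantage of your write-up over the bare citation: every ingredient (Serre's criterion, formal functions, $T_{\P^1}={\cal O}(2)$, $H^1(\P^1,{\cal O}(d))=0$ for $d\ge-1$) is characteristic-free, whereas \cite{Friedman} is written over $\C$; since the paper invokes the theorem over $\F_p$, making this independence explicit is not a luxury but something the paper tacitly assumes.
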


From this, one deduces $H^1(\Theta_{Y/k})=H^2(\Theta_{Y/k})=0$. This implies that $Y$ lifts
over any $S$ and does so in a {\em unique} manner.
Explicitly one can construct this lifting of $Y$ over $S$ as follows. We put
\[{\cal X}=Spec({\cal R}),\;\; {\cal R}:={\cal O_S}[[x,y,z,t]]/(xy-zt)\]
and let
\[{\cal Y}=B_Z({\cal X}), \;\;Z=V(x,z) \subset {\cal X}\]

Note that
\[\T^1(X/k)=H^0(X,{\cal T}^1_{X/k})=k[[x,y,z,t]]/(J_f,f)=k[[x,y,z,t]]/(x,y,z,t)=k\]
so the lifting of $X$ is not unique, rather there is a one-dimensional space of
choices.

The lifting of $X$ over which there also is a lifting of $Y$ can be characterized
by having a {\em singular section}
$\sigma:S \lra \cal X$
induced by the canonical projection
\[{\cal R} \lra {\cal O}_S,\;\;x,y,z,t \mapsto 0\]
This is to be contrasted with the lifting of $X$ to
\[{\cal X}=Spec({\cal R}),\;\;{\cal R}={\cal O}_S[[x,y,z,t]]/(xy-zt-q)\]
where ${\cal O}_S/(q)=k$.
As a result of this local analysis one finds the following

\begin{proposition}
Let $X$ be a scheme having $\Sigma \subset X$ as set of nodes and
let $\pi: Y \lra X$ be a small resolution.
Let ${\cal Y}$ be any lift of $Y$ over $S$ and let ${\cal X}$ be the
blow down of ${\cal Y}$. Then for each $x \in \Sigma$ there is
a singular section $\sigma: S \lra {\cal X}$ passing through $x$.
\end{proposition}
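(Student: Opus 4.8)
The plan is to reduce the statement to the explicit local analysis carried out above. Since $S=\mathrm{Spec}(A)$ is artinian local with residue field $k$ and $x$ is a point of the closed fibre $X$ of $\mathcal{X}\lra S$, every section of $\mathcal{X}\lra S$ through $x$ factors through $\mathrm{Spec}\,\widehat{\mathcal{O}}_{\mathcal{X},x}$, and whether such a section is \emph{singular} --- i.e.\ has its image in the locus where $\mathcal{X}\lra S$ fails to be smooth --- can be tested there as well. So it suffices to identify $\mathcal{X}$ in a formal neighbourhood of $x$.

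Fix $x\in\Sigma$. As $x$ is a three-dimensional ordinary double point of $X$, one has $\widehat{\mathcal{O}}_{X,x}\cong k[[x,y,z,t]]/(xy-zt)$ (after an \'etale base change to split the node if need be), and the germ of the small resolution $\pi$ along the exceptional curve $C=\pi^{-1}(x)$ is, up to the choice of flop, the Atiyah model $Y_0=B_Z(X_0)$ with $X_0=\mathrm{Spec}(k[[x,y,z,t]]/(xy-zt))$ and $Z=V(x,z)$; here $C\cong\P^1$ has normal bundle ${\cal O}(-1)\oplus{\cal O}(-1)$, and one uses the smallness of $\pi$ together with the classification of small resolutions of the threefold node. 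By Friedman's Theorem~4.1 applied to $Y_0\lra X_0$ one obtains $H^1(\Theta_{Y_0/k})=H^2(\Theta_{Y_0/k})=0$; hence $Y_0$ lifts over $S$, uniquely, the lift being the explicit $\mathcal{Y}_0=B_Z(\mathcal{X}_0)$ with $\mathcal{X}_0=\mathrm{Spec}(\mathcal{O}_S[[x,y,z,t]]/(xy-zt))$ and $Z=V(x,z)$, which is $S$-flat and reduces to $Y_0$ over $k$.

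Now I would transport the given data into this model. The restriction of $\mathcal{Y}\lra S$ to a formal neighbourhood of $C$ is a lift of $Y_0$, hence by uniqueness is isomorphic to $\mathcal{Y}_0$. The preferred lift $\mathcal{X}$ of Theorem~3.1 was constructed from the sheaf-level rule $U\mapsto H^0(\pi^{-1}(U),\mathcal{O}_{\mathcal{Y}})$; evaluating this near $x$ and using $\mathcal{Y}\cong\mathcal{Y}_0$ near $C$ together with $H^0(\mathcal{Y}_0,\mathcal{O}_{\mathcal{Y}_0})=\mathcal{O}_S[[x,y,z,t]]/(xy-zt)$ --- the analogue over $\mathcal{O}_S$ of Friedman's equalities $\pi_*\mathcal{O}_{Y_0}=\mathcal{O}_{X_0}$, $R^1\pi_*\mathcal{O}_{Y_0}=0$, obtained from them by flat base change from $k$ --- we conclude that a formal neighbourhood of $x$ in $\mathcal{X}$ is $\mathcal{X}_0$. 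The required section $\sigma$ is then the one induced by the canonical projection $\mathcal{O}_S[[x,y,z,t]]/(xy-zt)\lra\mathcal{O}_S$, $x,y,z,t\mapsto 0$: it is a section of $\mathcal{X}_0\lra S$ reducing to $x$ over $k$, and over each point $s$ of $S$ it passes through the node of the fibre $\mathrm{Spec}(\kappa(s)[[x,y,z,t]]/(xy-zt))$, so it is singular.

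I expect the main work to be in the middle step: verifying that the germ of $\pi$ along $C$ really is the Atiyah model and, above all, matching the abstractly produced lift $\mathcal{Y}$ and its blow-down $\mathcal{X}$ with the explicit local objects. This rests on the classification of small resolutions of the threefold node and on the uniqueness of liftings of $Y_0$ over $S$, which is available only because $Y_0$ --- though not affine, being proper over a complete local ring and containing a $\P^1$ --- has $H^1(\Theta_{Y_0/k})=H^2(\Theta_{Y_0/k})=0$ by Friedman; one must also check that the local identification is compatible with the sheaf-theoretic blow-down of Theorem~3.1, and dispose of a possibly non-split node over an imperfect residue field, which is harmless since a twisted nondegenerate quadric still carries a singular section through its vertex (alternatively one passes to an \'etale cover of $S$).
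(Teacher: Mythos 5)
Your argument is correct and is essentially the paper's own: the authors state the proposition as "a result of this local analysis," meaning exactly the reduction you perform --- identify the formal neighbourhood of a node with the Atiyah model, invoke Friedman's vanishing to get the unique local lift $\mathcal{Y}_0=B_Z(\mathcal{X}_0)$, observe that its blow-down is $\mathrm{Spec}(\mathcal{O}_S[[x,y,z,t]]/(xy-zt))$, and exhibit the singular section $x,y,z,t\mapsto 0$. You have merely made explicit the compatibility of this local identification with the global blow-down construction of Theorem~3.1, which the paper leaves implicit.
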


As a corollary we obtain the following theorem that forms the basis for
our construction of non-liftable Calabi-Yau spaces. Recall that $X/k$
is called {\em rigid} if $\T^1(X/k)=0$.

\begin{theorem}\label{haupt}
Let ${\cal X}$ be a scheme over $S=Spec(A)$, $A$ a complete domain with
residue field $k$ and fraction field $K=Q(A)$.
Assume that:\\
1) The generic fibre $X_{\eta}:={\cal X} \otimes_A K $ is smooth.\\
2) The special fibre $X:={\cal X} \otimes_A k$ is rigid with nodes as singularities.\\

Let $\pi: Y \lra X$ be a small resolution. Then $Y$ does not lift to $S$.
\end{theorem}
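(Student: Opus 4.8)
The plan is to argue by contradiction, assuming that $Y$ lifts to $S$, and then derive from this the existence of a simultaneous lifting of $X$ and a singular section, which will contradict the smoothness of the generic fibre $X_\eta$ together with the rigidity of $X$.

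First I would suppose that there is a lifting $\mathcal{Y} \lra S$ of $Y$. Since $\pi: Y \lra X$ is a small resolution of a scheme with nodes, Friedman's theorem (the local statement, globalized over the nodes $\Sigma$) gives $\mathcal{O}_X = \pi_*(\mathcal{O}_Y)$ and $R^1\pi_*(\mathcal{O}_Y) = 0$. Hence Theorem~3.1 applies and produces a preferred blow-down lifting $\mathcal{X} \lra S$ of $X$ sitting under $\mathcal{Y}$. Now I would invoke Proposition~4.2: for each node $x \in \Sigma$ there is a singular section $\sigma: S \lra \mathcal{X}$ through $x$; locally around $x$ the total space $\mathcal{X}$ looks like $\mathcal{O}_S[[x,y,z,t]]/(xy - zt)$ rather than the ``smoothing'' form $\mathcal{O}_S[[x,y,z,t]]/(xy - zt - q)$.

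Next I would bring in rigidity. By hypothesis $\T^1(X/k) = 0$, so by Theorem~2.1 (applied step by step along the filtration of $A$ by powers of the maximal ideal, reducing to simple extensions) the lifting of $X$ over $S$ is \emph{unique} up to isomorphism — there is essentially only one lifting, namely the one coming from $\mathcal{X}$. But $\mathcal{X} \lra S$ was shown to carry a singular section through every node, so its generic fibre $\mathcal{X} \otimes_A K$ is singular at the point to which $\sigma$ specializes generically: the local equation $xy - zt = 0$ over $K$ defines a node. This directly contradicts assumption~1), that $X_\eta = \mathcal{X} \otimes_A K$ is smooth. Therefore no lifting $\mathcal{Y}$ of $Y$ can exist.

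The main obstacle — and the point that needs care rather than the formal contradiction — is making precise that rigidity of $X$ over $k$ forces the given deformation $\mathcal{X} \lra S$ over the \emph{fixed} base $S = \mathrm{Spec}(A)$ to be the unique one, and in particular that \emph{any} lifting of $X$ that one might hope to be smooth on the generic fibre must coincide with the blow-down $\mathcal{X}$. Here one uses that $\T^1(X/k) = 0$ kills both the obstruction-to-uniqueness at each simple extension step (the torsor over $\T^1(X/k, f^*I)$ is a single point since $\T^1(X/k, f^*I) = 0$ for any $I$ when $\T^1(X/k) = 0$, as $\T^1$ is right-exact in the coefficient module for square-zero extensions) and thus propagates uniqueness up the whole tower defining $A$. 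Once uniqueness is in hand, the singular section from Proposition~4.2 is an unavoidable feature of \emph{the} lifting, and its persistence to the generic fibre contradicts smoothness there. \qed
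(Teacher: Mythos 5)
Your proposal is correct and follows essentially the same route as the paper: blow down the hypothetical lifting of $Y$ (using Friedman's vanishing and the blow-down theorem), invoke the singular-section proposition at each node, use rigidity of $X$ to identify the blown-down lifting with the given one ${\cal X}$, and contradict the smoothness of the generic fibre. The only cosmetic difference is that the paper works through the truncations $S_n=Spec(A/{\bf m}^{n+1})$ explicitly, which is exactly the reduction to simple extensions you describe in your final paragraph.
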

\begin{proof}
Let $A_n:=A/{\bf m}^{n+1}$ and put $S_n:=Spec(A_n)$, $X_n:={\cal X}\times_SS_n \lra S_n$,
so that $X=X_0$.
Assume that $Y$ has a lift ${\cal Y}$ over $S$. From it we get a lift $Y_n:={\cal Y} \times_SS_n$
of $Y$. Then the blow-down of $Y_n \lra S_n$ is a lift of $X$, which by the previous theorem comes with
 singular sections through the singularities.
As $X$ is rigid, the lifting is unique, hence must be isomorphic to the given
lifting $X_n \lra S_n$. Hence, for all $n$ the morphism $X_n \lra S_n$ has a singular section. This
contradicts the smoothness of the generic fibre of ${\cal X} \lra S$. Hence $Y$ can not
lift over $S$.
\end{proof}

\begin{corollary} Under the above assumptions, if $A=\Z_p$, then
$Y$ has no lift to characteristic zero.
\end{corollary}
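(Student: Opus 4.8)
The plan is to deduce the corollary directly from Theorem~\ref{haupt} by taking $A=\Z_p$. Since $\Z_p$ is a complete discrete valuation ring, hence a complete domain, with residue field $k=\F_p$ and fraction field $\Q_p$ of characteristic zero, hypotheses 1) and 2) of Theorem~\ref{haupt} are precisely the hypotheses of the corollary, so the theorem immediately gives that $Y$ admits no lift over $S=\mathrm{Spec}(\Z_p)$. In the convention where ``lift to characteristic zero'' means a lift over $W(\F_p)=\Z_p$, this is already the full statement; the only real content is to see that the conclusion persists for an arbitrary characteristic-zero lift.

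So I would first fix the meaning of the term: a flat proper scheme $\mathcal Y_R\lra\mathrm{Spec}(R)$ with $R$ a complete discrete valuation ring with residue field $\F_p$ (the case of a finite residue extension $\F_q$ reduces to this: extending the residue field affects neither the rigidity nor the nodality of $X$ nor the smoothness of $X_\eta$) and fraction field $K_R$ of characteristic zero; such an $R$ is automatically flat over $\Z_p$, being torsion-free. Then I would rerun the argument of Theorem~\ref{haupt} with $\mathrm{Spec}(R)$ in place of $\mathrm{Spec}(\Z_p)$. Concretely, for $R_n:=R/\mathfrak m_R^{n+1}$ the lift $\mathcal Y_R\otimes_R R_n$ of $Y$ blows down, by the theorem of Section~3, to a lift of $X$ over $R_n$, and by the proposition on singular sections this lift carries, through each node, a section along which the fibres are singular. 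Because $X$ is rigid, $\T^1(X/\F_p)=0$, so the lift of $X$ is unique at every infinitesimal step; hence it must coincide with $\mathcal X\otimes_{\Z_p}R_n$, which therefore acquires these singular sections for all $n$.

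Passing to the limit, the infinitesimal singular sections assemble into a formal section of the formal completion of $\mathcal X\otimes_{\Z_p}R$ along its closed fibre; algebraizing it produces an honest section $\mathrm{Spec}(R)\lra\mathcal X\otimes_{\Z_p}R$ along which the fibres are singular, and base change to $K_R$ exhibits a singular point on $X_\eta\otimes_{\Q_p}K_R$. Since smoothness of $X_\eta$ over $\Q_p$ would be preserved by the extension $\Q_p\subset K_R$, this contradicts hypothesis 1), so no such $\mathcal Y_R$ exists. The step I expect to be the main obstacle is exactly this passage from the compatible family of infinitesimal singular sections to an actual section of $\mathcal X\otimes_{\Z_p}R$: it is the one place where one must invoke properness of $\mathcal X$ over $\mathrm{Spec}(\Z_p)$ — in the applications, projectivity — to apply Grothendieck's existence theorem, and it is the only point where the corollary uses more than the verbatim conclusion of Theorem~\ref{haupt}. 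A secondary, purely bookkeeping matter is checking that base change along $\Z_p\to R$ sends lifts of $X$ over $\Z/p^{n}$-algebras to lifts over the corresponding $R$-algebras compatibly with blowing down, and the reduction from a finite residue field $\F_q$ to $\F_p$.
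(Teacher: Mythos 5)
Your argument is correct and takes essentially the same route as the paper: the paper likewise reduces to Theorem~\ref{haupt} over the larger base $\mathrm{Spec}(R)$ by blowing down the hypothetical lift, identifying the resulting family with the pullback of ${\cal X}$ via rigidity, and contradicting smoothness of the generic fibre. The only difference is presentational — you inline the proof of Theorem~\ref{haupt} level by level and make the formal-to-algebraic passage (via Grothendieck's existence theorem, or equivalently the local analysis of the node's deformation parameter) explicit, where the paper simply cites the theorem after the identification.
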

\begin{proof}
Let ${\cal Y} \lra S$ is a lift of $Y$, where $S$ is a complete DVR which is
a finite extension of $\Z_p$. We get by blowing down a family ${\cal X}' \lra S$ and a map ${\cal Y} \lra {\cal X}'$ over $S$.
Pulling back the given family ${\cal X} \lra Spec(\Z_p)$ via the finite map
$Spec(S) \lra Spec(\Z_p)$ we get a family ${\cal X}'' \lra S$. By rigidity of
$X$ the lifting is unique and we conclude that ${\cal X}'$ and ${\cal X}''$ are isomorphic over $S$. We now get a contradiction to previous theorem \ref{haupt}.
\end{proof}

\begin{remark}{\em The theorem can be understood in terms of $\T^i$ as follows.
As $X$ is assumed to have only nodes as singularities, one has ${\cal T}^2_{X/k}=0$ and
we have an exact sequence
\[0 \lra \T^1(Y/k) \lra \T^1(X/k) \lra H^0(X,{\cal T}^1_{X/k}) \lra \T^2(Y/k) \lra \T^2(X/k) \lra 0\]
If furthermore $X/k$ is rigid, we see that $Y/k$ is rigid as well. The sheaf ${\cal T}^1_{X/k}$
is supported on the singularities and each node contributes a copy of $k$ to $H^0(X,{\cal T}^1_{X/k})$.
Any lifting of $X$ that is ``smoothing out'' a node will produce a non-zero element of
$H^0(X,{\cal T}^1_{X/k})$, which in turn gives a non-zero element in $\T^2(Y/k)$, namely the
obstruction to lift $Y$ over that lifting of $X$. That obstruction element maps to zero in
$\T^2(X/k)$, as it should.}
\end{remark}

\section{Projective examples}

We give some examples where the ideas of the previous sections can be applied.

\subsection{A projective Calabi-Yau threefold over $\F_3$ with no lifting.}

Let $D\subset\PP$ be the  arrangement of eight planes given by the following
equation
\[(x-t)(x+t)(y-t)(y+t)(z-t)(z+t)(x+y+z-t)(x+y+z-3t)=0.\]
This is the arrangement no 86$^{a}$ from the paper \cite{CM}, it
consist of six planes of a cube and two additional planes: one
contains three vertices of the cube but no edges, the other contains
exactly one vertex of the cube and is parallel to the first one. In
fact there are two choices for the second plane and we make
a specific choice, as suggested in the picture.

\begin{figure}[htbp]
\centering\epsfbox{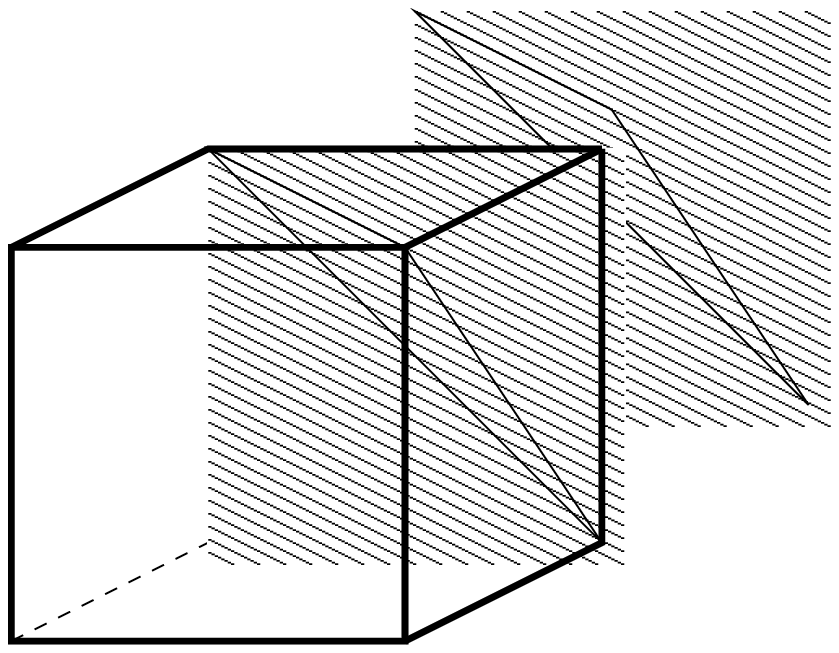}
  \caption{}\label{fig:octic}
\end{figure}

The double cover of $\PP$ branched along this octic surface has a
model ${\cal X}$ over $\Z$, whose generic fibre ${\cal X}_{\eta}$
is a rigid Calabi--Yau manifold with the topological Euler
characteristic $e({\cal X}_{\eta})=80$ and the Hodge numbers
\[ h^{11}=40,\;\;\;h^{12}=0\]
The reduction $X_3$ of ${\cal X}$ modulo 3 turn out to be singular.
To find the singularities of $X_3$ we have
to compare the singularities of the arrangement $D$ and its reduction
$D_{3}$ modulo 3. The only singularities of $D$ are 28 double lines
and 10 fourfold points (of type $p^{0}_{4}$ in the notations introduced
in \cite{CM}), the reduction $D_{3}$ has {\bf one additional
fourfold point}, as in characteristic 3 the plane $x+y+z-3t=0$ contains
the two vertices $(1,1,1,1)$ and $(-1,-1,-1,1)$ of the cube.

To study the resulting singularities, we have to study the resolution process
of the double cover. The general strategy is to blow up in $\PP$ to
make the strict transform of the divisor $D$ non-singular.
This can be done by first blowing--up the 10 fourfold points and
then sequentially the double lines. In characteristic 3 $D$ has an extra
fourfold point, but we still blow--up first the ten fourfold points and
then the double lines. At the eleventh fourfold point, after blowing--up the first
 double line through it, the strict transforms of the two planes that do not contain that line intersect along a {\em cross} (two intersecting lines). When we
blow--up the cross we end up with a threefold with one node and a sum
of eight smooth, pair--wise disjoint surfaces which do not contain the
node.

In local coordinates we can assume that the four planes we are
considering have equations $x=0,y=0,z=0, x+y+z-3=0$. In
characteristic zero we blow--up the double lines. Let us first
blow-up the two disjoint lines $x=y=0$ and $z=x+y+z-3=0$. In one
of the affine charts the blow--up of $\mathbb P^{3}$ is given by
the equation $(v-1)z=x(y+1)-3$. The threefold is smooth
but in characteristic 3 it acquires  a node at
$x=0,y=-1,z=0,v=1$. Since the surface $x=0,z=0$ is  a Weil
divisor on the threefold which is not Cartier  (it is a
component of the exceptional locus of the blow--up) the node
admits a projective small resolution.

The double cover of the threefold branched along the sum of
surfaces, is the reduction $X_{3}$ of ${\cal X}$ modulo 3. It has two
nodes, which form the preimage of the double point under the double cover.
There exists a projective small resolution $X_3$ of the nodes which
is a smooth rigid Calabi--Yau manifold in characteristic 3. The
projective small resolution can be obtained directly from the reduction
$D_{3}$ of the arrangement $D$ modulo 3 by blowing--up first all eleven
fourfold points and then the double lines.

Using the formulas from \cite{CM,CvS} we can compute the Hodge numbers
$h^{11}=42$, $h^{12}=0$, so indeed $Y_3$ is also rigid.
As the nodal $X_3$ is the reduction of ${\cal X}$ with smooth general fibre
${\cal X}_{\eta}$, we can apply the theorem of the previous section to conclude
that
$Y_3$ can not be lifted to characteristic zero.
A local analysis learns that the nodes of $X_3$ are lifted to
\[(\Z/9)[[x,y,z,t]]/(xy-zt-3),\]
which shows that $Y_3$ does not lift to $Spec(\Z/9\Z)$, that is, $Y_3$ has
no lifting at all.

\subsection{A Calabi-Yau threefold over $\F_5$ with an obstructed deformation.}

Consider the  octic arrangement $D\subset\PP$ of eight planes given by (see
fig.~\ref{fig:octic2})
\[ (x-t)(x+t)(y-t)(y+t)(z-t)(z+t)(x+y+Az-At)(x-By-Bz+t)=0.\]

\begin{figure}[htbp]
\centering\epsfbox{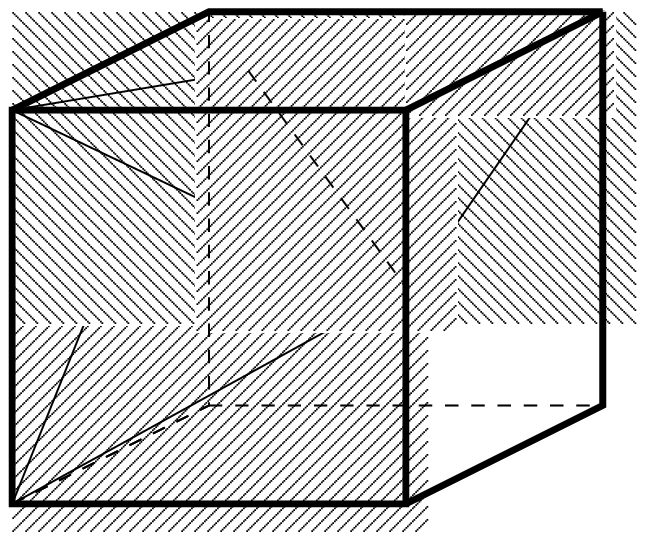}
  \caption{}
\label{fig:octic2}
\end{figure}
For general $A$ and $B$ the arrangement has seven fourfold points
(of type $p_4^0$), namely
the three infinite points $(1,0,0,0)$, $ (0,1,0,0)$, $(0,0,1,0)$ and four
of the vertices of the cube, namely $(1,-1,1,1)$, $(-1,-1,1,1)$ contained
in the plane $(x+y+Az-At)=0$ and $(-1,1,1,1)$, $(-1,1,-1,1)$ contained in the
plane $(x-By-Bz+t)=0$.\\
If there is a special relation between the parameters
$A$ and $B$, the line of intersection of these two planes intersect an edge
of the cube, giving rise to an arrangement with {\em eight} fourfold point.
For example, the two planes meet on the edge $y+t=z+t=0$ precisely when
$A+B=-1$ and on the edge $x-t=y-t=0$ precisely when $AB=A+B$.
In that case the resolution of the double cover branched along $D$
defines a smooth Calabi-Yau manifold with $e=72$
\[h^{11}=37,\;\;\;h^{12}=1\]
as can be computed using \cite{CvS}. Indeed, moving this extra fourfold point along the edge of the cube
produces an equisingular deformation of the arrangement and leads to a
one-parameter family of such Calabi-Yau threefolds.

If {\em both} conditions
\[ A+B=-1,\;\;\;AB=A+B\]
are satisfied, then $A$ and $B$ are roots of the equation
\[x^2+x-1=0\]
and the arrangement has {\em nine} fourfold points.
Hence such an arrangement with nine fourfold points defines
a Calabi-Yau space flat over
\[\Z[x]/(x^2+x-1)\]
The generic fibre is a projective Calabi-Yau threefold with
\[h^{11}=38,\;\;h^{12}=0\]

Now look at the reduction $D_{5}$ of $D$ modulo $5$.
Using \cite{CvS} one computes that one obtains again
a smooth Calabi-Yau manifold $Y_5$, now with
\[h^{11}(X)=39,\;\;h^{12}(X)=1\]
The remarkable thing that happens is
\[x^2+x-1=(x-2)^2 \mod\;5\]
so that
\[\Z[x]/(x^2+x-1) \otimes \F_5 = \F_5[\epsilon], \epsilon^2=0\]

As according to \cite{CvS} the infinitesimal deformations of $Y_5$
are given by equisingular infinitesimal deformations of $D_{5}$.
As the only solution to $x^2+x-1$ in $\mathbb F_{5}$ is 2, in the
ring of dual number $\mathbb F_{5}[\epsilon]$, ($\epsilon^{2}=0$), it
has also the solution $2+\epsilon$, which means that the infinitesimal
equisingular deformations of $D_{5}$ (and hence also infinitesimal
deformations of $X$) are given by arrangement $D_{\epsilon}$ given by
the following equation
\begin{eqnarray*}
 (x-t)(x+t)(y-t)(y+t)(z-t)(z+t)\cdot\rule{5cm}{0cm}\\
\left(x+y+(2+\epsilon)z-(2+\epsilon)t\right)\left(x+(3+\epsilon)y+(3+\epsilon)z+t\right)=0 .
\end{eqnarray*}
This first order deformation cannot be lifted to a second order
deformation (and hence also to a family) since 2 is the only solution
of $x^{2}+x-1=0$ in the fields $\mathbb F_{5}$ and $\bar{\mathbb
   F}_{5}$, whereas in the rings $\mathbb F_{5}[\epsilon]/ \epsilon^{3}$
 and $\bar{\mathbb   F}_{5}[\epsilon]/ \epsilon^{3}$ its solutions are
 $2+c\cdot \epsilon^{2}$ (and similar for higher orders).
The smooth Calabi--Yau manifold $X$ has obstructed
deformations over $\mathbb F_{5}$ and $\bar {\mathbb F}_{5}$.

Similarly one can show that $Y_5$ has no lifting to
$\mathbb Z/25\Z$.

\section{Non-projective Examples}

It is rather easy to produce examples of rigid Calabi-Yau spaces ${\cal X}$
over $\Z$ whose reduction $X={\cal X}$ mod $p$ aquires some extra nodes.
However, in many cases there does not exist a small resolution
$\pi:Y \lra X$ in the categrory of schemes. However, in the larger category
of {\em algebraic spaces} Artin \cite{Artin} has shown that small resolutions do exist.
The arguments in the derivation of theorem \ref{haupt} were of very general nature
and carry over verbatim to the more general context of algebraic spaces.

\subsection{A Calabi-Yau space over $\F_5$ having no lift.}
Consider the arrangement $D\subset \PP$ given by the following equation
\[\left(x^{3}+y^{3}+z^{3}+t^{3}-(x+y+z+t^{3})\right)(x+y)(y+z)(z+t)(x+y+z)(y+z+t)=0.\]

In $\P^4$ this octic surface can be describe by two symmetric equations
\begin{eqnarray*}
&&  (x^{3}+y^{3}+z^{3}+t^{3}+u^{3})(x+y)(y+z)(z+t)(t+u)(u+x)=0\\
 && x+y+z+t+u=0
\end{eqnarray*}
It consists of the Clebsch diagonal cubic  (see fig.~\ref{fig:clebsch})
\[x^{3}+y^{3}+z^{3}+t^{3}-(x+y+z+t)^{3}=0\]

\begin{figure}[htbp]
\centering\epsfbox{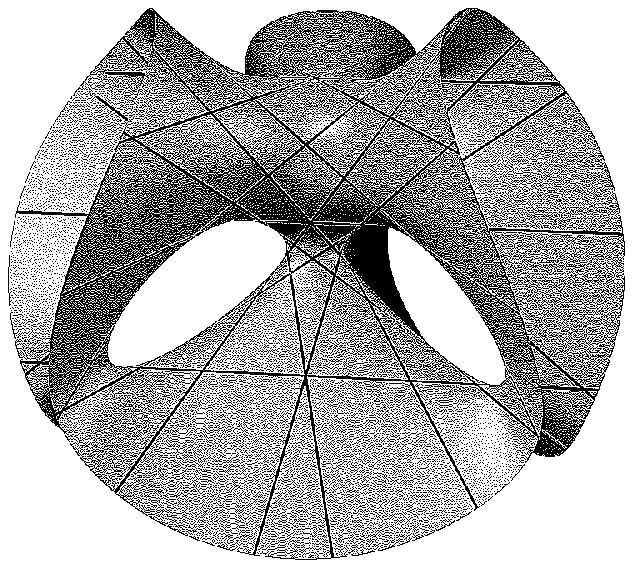}
  \caption{}\label{fig:clebsch}
\end{figure}

and five planes tangent at points where three lines lying on the cubic
meet, these points are called \textbf{Eckardt points}.
A smooth cubic in $\PP$ has at most 10 Eckardt points, the diagonal
Clebsch cubic is the only cubic in $\PP$ that has exactly ten Eckardt
points. One of them is the point $(1,-1,0,0)$, and the other can be
obtained by permutation of coordinates (in $\P^{4}$). Every
Eckardt point is
contained in three line lying on the surface, on any of this lines
there is another Eckardt point. We choose five Eckardt points and five
line lying on the cubic such that the lines form a pentagon with
vertices at the fixed Eckardt points.

The singularities of the surface $D$ consists of ten fourfold points,
ten double and five triple lines.  The fourfold points fall into two
groups
\[
\begin{array}{lcc||ccl}
A_{1}&=&(1,-1,0,0)\rule{12mm}{0cm}&\rule{12mm}{0cm}B_{1}&=&(1,-1,1,-1)\\
A_{2}&=&(0,1,-1,0)\rule{12mm}{0cm}&\rule{12mm}{0cm}B_{2}&=&(0,1,-1,1)\\
A_{3}&=&(0,0,1,-1)\rule{12mm}{0cm}&\rule{12mm}{0cm}B_{3}&=&(-1,0,1,-1\\
A_{4}&=&(0,0,0,1)\rule{12mm}{0cm}&\rule{12mm}{0cm}B_{4}&=&(1,-1,0,1)\\
A_{5}&=&(1,0,0,0)\rule{12mm}{0cm}&\rule{12mm}{0cm}B_{5}&=&(-1,1,-1,0)
\end{array}
\]
The triple lines are
\[\def\arraycolsep{1pt}
\begin{array}{lccccc}
l_{1}:\hspace*{2mm}&x+y&=&z+t&=&0\\
l_{2}:\hspace*{2mm}&x+y&=&z&=&0\\
l_{3}:\hspace*{2mm}&y+z&=&x&=&0\\
l_{4}:\hspace*{2mm}&y+z&=&t&=&0\\
l_{5}:\hspace*{2mm}&z+t&=&y&=&0.
\end{array}
\]
The double lines also fall into two groups
\[
\begin{array}{lccccc||lccccc}
m_{1}:\hspace*{1mm}&x+y&=&t&=&0\hspace*{5mm}&\hspace*{5mm}
n_{1}:\hspace*{1mm}&x+y&=&y+z&=&0\\
m_{2}:\hspace*{1mm}&y+z&=&x+t&=&0\hspace*{5mm}&\hspace*{5mm}
n_{2}:\hspace*{1mm}&x+y&=&y+z+t&=&0\\
m_{3}:\hspace*{1mm}&z+t&=&x&=&0\hspace*{5mm}&\hspace*{5mm}
n_{3}:\hspace*{1mm}&y+z&=&z+t&=&0\\
m_{4}:\hspace*{1mm}&x+z&=&y&=&0\hspace*{5mm}&\hspace*{5mm}
n_{4}:\hspace*{1mm}&z+t&=&x+y+z&=&0\\
m_{5}:\hspace*{1mm}&y+t&=&z&=&0\hspace*{5mm}&\hspace*{5mm}
n_{5}:\hspace*{1mm}&x+y+z&=&y+z+t&=&0\\
\end{array}
\]
The following tables describe incidences of points and lines

\[\def\arraycolsep{8pt}
\begin{array}{c|l| |c|l}
\mbox{line}&\mbox{contains points}&\mbox{point}&\mbox{lies on lines}\\\hline\hline
l_{1}&A_{1},A_{3},B_{2}&A_{1}&l_{1},l_{2},m_{1},n_{4}\\
l_{2}&A_{1},A_{4},B_{1}&A_{2}&l_{3},l_{4},m_{2},n_{5}\\
l_{3}&A_{2},A_{4},B_{2}&A_{3}&l_{1},l_{5},m_{3},n_{2}\\
l_{4}&A_{2},A_{5},B_{5}&A_{4}&l_{2},l_{3},m_{4},n_{1}\\
l_{5}&A_{3},A_{5}B_{3}&A_{5}&l_{4},l_{5},m_{5},n_{3}\\\hline
m_{1}&A_{1},B_{5}&B_{1}&l_{1},m_{2},n_{1},n_{5}\\
m_{2}&A_{2},B_{1}&B_{2}&l_{2},m_{3},n_{3},n_{4}\\
m_{3}&A_{3},B_{2}&B_{3}&l_{3},m_{4},n_{4},n_{5}\\
m_{4}&A_{4},B_{3}&B_{4}&l_{4},m_{5},n_{2},n_{5}\\
m_{5}&A_{5},B_{4}&B_{5}&l_{5},m_{1},n_{1},n_{2}\\\hline
n_{1}&A_{4},B_{1},B_{5}&&\\
n_{2}&A_{3},B_{4},B_{5}&&\\
n_{3}&A_{5},B_{1},B_{2}&&\\
n_{4}&A_{1},B_{2},B_{2}&&\\
n_{5}&A_{2},B_{3},B_{4}&&
\end{array}
\]

Consider the double covering of $\PP$ branched along $D$. We shall
show it has a model ${\cal X}$ over $\Z$, whose generic
fibre ${\cal X}_{\eta}$ is a Calabi--Yau manifold.
Although $D$ is not an octic arrangement in the sense of
\cite{CM}, the singularities are can be resolved in a similar manner.
First we blow--up the fourfold points $A_{i}$ and $B_{i}$.
The lines intersecting at points $B_{i}$'s are now disjoint. Since at the
points $A_{i}$'s one of the planes was threefold tangent to the cubic, after
blowing up that point the strict transform of the cubic and the
tangent plane are still tangent along a line. This line intersects the
other two planes that contained the fourfold point. We have to blow--up
the line, then the strict transforms are transversal so we have to
blow--up a line again.

After this we end up with 10 double and 5 triple lines, these lines are
pairwise disjoint. We blow--up them all, since we add the exceptional
divisors corresponding to the triple lines to the branch locus, we get
15 additional double lines which we have to blow--up again.

Now, we shall compute the Euler characteristic of the resulting
Calabi--Yau manifolds, since we blow--up $\PP$  ten times at a point
and at 40 times at a line --- for every point $A_{i}$ we blow--up twice
at the intersection of the strict transform of cubic at tangent plane,
there are 10 double  and 5 triple lines, but every triple lines is
resolved by blowing--up four times at a line). Consequently
$e(\tilde\PP)=4+10\cdot2+40\cdot2=104$.

The final branch locus is a blow--up  of a sum of a cubic, five planes
and five products $\P^1\times \P^1$. During blow--up of every fourfold
point we blow--up a cubic and three planes, moreover blowing--up every
line of intersection of a strict transform of the cubic and tangent
plane we blow--up two planes. So we get $e(D^{*})=9+5\cdot 3+5\cdot2\cdot2+10\cdot
4+10\cdot2=104$.

The Calabi--Yau manifold $X$ is rigid, since the resolution of the
singularities was obtained by blowing--up points and lines, using
\cite{CvS}, it is enough to show, that every equisingular deformation
of $D$ is trivial. Observe that the points $B_{i}$ lines $L_{i},
m_{i}, n_{i}$ are all determined by the points $A_{i}$. Indeed
intersection of the plane containing three consecutive vertices of
the pentagon formed by points $A_{i}$ with the line containing the
other two vertices gives one of the points $B_{i}$, so we can
determine all the fourfold points, then also the lines are
determined. Since all the choices all five points in general position
in $\PP$ are projectively equivalent, it is enough to check with a
computer algebra program that there
exists exactly one cubic surface which contains all the lines.

So we have $e(X)=104, h^{11}=52, h^{12}=0$. Since during the
resolution we blow--up $\PP$ fifty times, the rank of the subgroup of the Picard group
that is invariant w.r.t the involution is 51. We can
use the standard method of counting points modulo
small primes to show that the Calabi--Yau manifold $X$ is
modular, and
the corresponding modular form has level $720$. If we multiply the
equation of the branch locus by 3, then we get a rigid Calabi--Yau manifold
with the same Hodge numbers and L--series of level five, exactly the
same as the Schoen self product of semistable elliptic fibrations
corresponding to $\Gamma^{0}_{0}(5)$.

The relation to prime $5$ is not surprising, as the Clebsch cubic
is known to be a model of the Hilbert modular surface for $\Q(\sqrt{5})$,
\cite{Hirzebruch}.
The reduction of Clebsch cubic modulo 5 has a node at the point
$(1,1,1,1)$, so $X_{5}$, the reduction mod $5$ of ${\cal X}$, gets
a node that is not present in characteristic zero.
The small resolution of the node $Y_{5}$ has no lifting
to characteristic zero, but turns out to be non-projective.

There is another arrangement with Clebsch cubic that leads to
nonliftable (but non-projective) Calabi--Yau manifold in
characteristic 5.

Consider the octic given in $\mathbb P^{4}$ by the following equations
\begin{eqnarray*}
&&  (x^{3}+y^{3}+z^{3}+t^{3}+u^{3})xyztu=0\\
 && x+y+z+t+u=0
\end{eqnarray*}

The cubic intersects planes in 15 lines with equations
$x+y=z+t=u=0$ and then permuting.
There are another double lines coming from intersection of planes
$x=y=z+t+u=0$. There are ten fourfold points which are intersection of
three lines and the cubic, they have coordinates $(1,-1,0,0,0)$ and
permuted. The resolution comes from blowing--up first fourfold points
and then 25 double lines. The Euler characteristic of smooth model is
84, the Hodge numbers $h^{1,1}=42$, $h^{1,2}=0$. We blow-up 35 times
so there are 6 skew-symmetric divisors, they come from contact planes
with equations $x+y=z+t+u=0$ and symmetric.
In characteristic five there is additional node but again we are not able to find a projective small resolution.

\subsection{Non--liftable Calabi--Yau spaces coming from fiber products.}
In this section we shall consider Calabi--Yau spaces that are
constructed as desingularization of fiber products of
semistable, rational elliptic surfaces with section. This
construction originally goes back to Schoen (\cite{Schoen}).
More precisely we consider twisted self-fiber products of
semistable elliptic surfaces with four singular fibers
(\cite{Schuett}).

There are six types of semi-stable rational elliptic surfaces
with
four singular fibres, the well-known Beauville surfaces, \cite{Beauville}.
Let $S_1$ and $S_2$ be two such surfaces. Assume that the singular fibers corresponds to points
$0,1,\lambda,\infty$ and $0,1,\mu,\infty$, assume that the fibers $(S_1)_\lambda$
and $(S_2)_\mu$ have type $I_1$. Then the fiber product has a (non--projective) small
resolution which is a rigid Calabi--Yau manifold. Assume that $p$ is a prime number such
that $\lambda$ and $\mu$ have the same reduction mod $p$ (different from $0$ and $1$).
Then the reduction $X_p$ of $X$ mod $p$ has an additional node and hence its
small resolution $Y_p$ is a Calabi--Yau space in characteristic $p$ that has
no lifting to characteristic zero.

We list some of the surfaces (the singular fibers and their positions) that we
shall use in our constructions

\def\arraystretch{1.4}
\[\begin{array}{|c|c|c|c|}\hline\hline
\multicolumn{4}{|l|}{\text{Twist of }S(\Gamma_1(5))}\\\hline
\parbox{2cm}{\hfill$0$\hfill\hfill}&\parbox{2cm}{\hfill$1$\hfill\hfill}
&\parbox{2cm}{\hfill$\infty$\hfill\hfill}&\frac12(55\sqrt5-123)\\\hline
I_5&I_1&I_5&I_1\\\hline\hline
\multicolumn{4}{|l|}{\text{Twists of }S(\Gamma_1(6))}\\\hline\hline
0&1&\infty&\frac89\\\hline
I_3&I_6&I_2&I_1\\\hline\hline
0&1&\infty&9\\\hline
I_2&I_3&I_6&I_1\\\hline\hline
0&1&\infty&-8\\\hline
I_3&I_2&I_6&I_1\\\hline\hline
\multicolumn{4}{|l|}{\text{Twists of
}S(\Gamma_0(8)\cap\Gamma_1(4))}\\\hline\hline
\parbox{2cm}{\hfill$0$\hfill\hfill}&\parbox{2cm}{\hfill$1$\hfill\hfill}
&\parbox{2cm}{\hfill$\infty$\hfill\hfill}&
\parbox{2cm}{\hfill$1$\hfill\hfill}
\\\hline
I_2&I_1&I_8&I_1\\\hline\hline
0&1&\infty&2\\\hline
I_1&I_2&I_8&I_1\\\hline\hline
0&1&\infty&\frac12\\\hline
I_8&I_2&I_1&I_1\\\hline\hline
\end{array}
\]
A small resolution of the fiber product of first two surfaces is a
rigid Calabi--Yau manifold $X$ with Euler number
$e(X)=2(5\times3+5\times6+1\times6)=102$ and the Hodge numbers
$h^{11}=51,\,h^{12}=0$. Now,
\begin{eqnarray*}
 &&\dfrac12(55\sqrt5-123)-\frac89=\frac{55\times9\sqrt{5}-(16+123\times9)}{18}
=\\ &&=\dfrac{-4\times9001}{18(55\times9\sqrt{5}+(16+123\times9))}
\end{eqnarray*}
so the reduction $X_{9001}$ of $X$ modulo $9001$ is a rigid Calabi--Yau variety
with one node. Its small resolution $Y_{9001}$ is a smooth Calabi--Yau space
over $\mathbb Z/9001$ that admits no lifting to characteristic 0.

Similarly
\begin{eqnarray*}
 &\dfrac12(55\sqrt5-123)=9\qquad&\text{ in } \quad\mathbb Z/29\quad\text{ and
}\quad\mathbb
Z/41\\
 &\dfrac12(55\sqrt5-123)=-8\qquad&\text{ in } \quad\mathbb Z/919\\
 &\dfrac12(55\sqrt5-123)=-1\qquad&\text{ in } \quad\mathbb Z/11\\
 &\dfrac12(55\sqrt5-123)=2\qquad&\text{ in } \quad\mathbb Z/251\\
 &\dfrac89=9\qquad&\text{ in } \quad\mathbb Z/73\\
&\dfrac89=-8\qquad&\text{ in } \quad\mathbb Z/5\\
&\dfrac89=-1\qquad&\text{ in } \quad\mathbb Z/17\\
&\dfrac89=\dfrac12\qquad&\text{ in } \quad\mathbb Z/7\\
&-1=2\qquad&\text{ in } \quad\mathbb Z/3\\
\end{eqnarray*}
Consequently,we can construct further non--liftable Calabi--Yau spaces in
characteristics $3, 5, 7, 11, 17, 29, 41, 73, 251$ and $919$.

\begin{remark}
The same examples of fiber product Calabi--Yau manifolds were
studied by C.~Schoen (\cite{Schoen2}). He proved that they have
third Betti number equal zero which implies non--liftability.
\end{remark}

\subsection*{Acknowledgments}
The work was done during the first named author's stays at the
Johannes-Gutenberg University in Mainz supported by supported by
DFG Schwerpunktprogramm 1094 (Globale Methoden in der komplexen
Geometrie) and the Sonderforschungsbereich/Transregio 45 (Periods, moduli spaces
and arithmetic of algebraic varieties).
He would like to thank the department for the hospitality and excellent working
conditions.

\vspace{0.8cm}

S\l awomir Cynk:
Instytut Matematyki, Uniwersytetu Jagiello\'nskiego, ul. Reymonta 4,
30-059 Krak\'ow,
POLAND;

{\tt cynk@im.uj.edu.pl}
\bigskip

Duco van Straten: Fachbereich 17,
AG Algebraische Geometrie,
Johannes Gu\-ten\-berg-Universit\"at,
D-55099 Mainz, GERMANY

{\tt straten@mathematik.uni-mainz.de}

\end{document}